\newtheorem{theorem}{Theorem}
\newtheorem{lemma}{Lemma}
\newtheorem{remark}{Remark}
\newtheorem{proposition}{Proposition}
\newtheorem{conjecture}{Conjecture}
\providecommand{\keywords}[1]
{
  \noindent \small	
  \textbf{Keywords:} #1
}
\providecommand{\amscode}[1]
{
  \noindent \small	
  \textbf{AMS subject classifications:} #1
}
\newcommand{\rd}{\, \mathrm{d}}
\newcommand{\bsx}{\boldsymbol{x}}
\newcommand{\bsy}{\boldsymbol{y}}
\newcommand{\bsz}{\boldsymbol{z}}
\newcommand{\bsgamma}{\boldsymbol{\gamma}}
\newcommand{\EE}{\mathbb{E}}
\newcommand{\NN}{\mathbb{N}}
\newcommand{\PP}{\mathbb{P}}
\newcommand{\RR}{\mathbb{R}}
\newcommand{\Kcal}{\mathcal{K}}
\title{A note on unshifted lattice rules for high-dimensional integration in weighted unanchored Sobolev spaces}
\author{Takashi Goda\thanks{Graduate School of Engineering, The University of Tokyo, 7-3-1 Hongo, Bunkyo-ku, Tokyo 113-8656, Japan ({\tt goda@frcer.t.u-tokyo.ac.jp})}}
\date{\today}
\begin{document}

\maketitle

\begin{abstract}
This short article studies a deterministic quasi-Monte Carlo lattice rule in weighted unanchored Sobolev spaces of smoothness $1$. Building on the error analysis by Kazashi and Sloan, we prove the existence of unshifted rank-1 lattice rules that achieve a worst-case error of $O(n^{-1/4}(\log n)^{1/2})$, with the implied constant independent of the dimension, under certain summability conditions on the weights. Although this convergence rate is inferior to the one achievable for the shifted-averaged root mean squared worst-case error, the result does not rely on random shifting or transformation and holds unconditionally without any conjecture, as assumed by Kazashi and Sloan.
\end{abstract}

\keywords{quasi-Monte Carlo methods, lattice rules, numerical integration, Sobolev spaces, Markov's inequality}

\amscode{65C05, 65D30, 65D32}

\section{Introduction}\label{sec:intro}
\sloppy

We study numerical integration of functions defined over the multi-dimensional unit cube $[0,1)^d$ with $d\in \NN$. For an integrable function $f: [0,1)^d\to \RR$, we denote its integral by
\[ I_d(f):=\int_{[0,1)^d}f(\bsx)\rd \bsx. \]
We consider approximating $I_d(f)$ by a deterministic quasi-Monte Carlo rank-1 lattice rule \cite{DKP22,SJ94}. That is, for a given number of points $n$ and a generating vector $\bsz\in \{1,\ldots,n-1\}^d$, we define the approximation
\[ Q_{d,n,\bsz}(f) := \frac{1}{n}\sum_{i=0}^{n-1}f(\bsx_i), \]
where the integration nodes $\bsx_i\in [0,1)^d$ are given by
\[ \bsx_i=\left( \left\{ \frac{iz_1}{n}\right\},\ldots, \left\{ \frac{iz_d}{n}\right\} \right), \]
and $\{x\}=x-\lfloor x\rfloor$ denotes the fractional part of $x\ge 0$.

We assume that the integrand $f$ belongs to the weighted unanchored Sobolev space of smoothness~$1$, denoted by $H_{d,\bsgamma}$, where $\bsgamma=(\gamma_u)_{u\subset \NN}$ is a collection of non-negative weights $\gamma_u\geq 0$  representing the relative importance of variable subsets \cite{SW98}. This space consists of functions whose first-order mixed partial derivatives are square-integrable. Moreover, it is a reproducing kernel Hilbert space with reproducing kernel
\[ K_{d,\bsgamma}(\bsx,\bsy)=\sum_{u\subseteq \{1,\ldots,d\}}\gamma_u \prod_{j\in u}\eta(x_j,y_j),\]
for $\bsx,\bsy\in [0,1)^d$, where
\[ \eta(x,y)=\frac{1}{2}B_2(|x-y|)+B_1(x)B_1(y),\]
and $B_i$ denotes the Bernoulli polynomial of degree $i$. As a quality criterion, we consider the worst-case error
\[ e(n,\bsz):=\sup_{\substack{f\in H_{d,\bsgamma}\\ \|f\|_{d,\bsgamma}\le 1}}\left| I_d(f)-Q_{d,n,\bsz}(f)\right|, \]
where $\|f\|_{d,\bsgamma}$ denotes the norm of $f$ in the space $H_{d,\bsgamma}$. We refer to \cite[Chapter~7.1]{DKP22} for the precise definitions of the inner product and the norm in $H_{d,\bsgamma}$.

Although it is known that a worst-case error of $O(n^{-1+\varepsilon})$ for arbitrarily small $\varepsilon>0$ can be achieved by suitably designed integration rules using $n$ function evaluations, existing results for rank-1 lattice rules rely on applying random shifts \cite{Kuo03} or transformations \cite{DNP14,GSY19} to the integration nodes. 

An exception is the work of Kazashi and Sloan \cite{KS18}, who studied the worst-case error of \emph{unshifted} rank-1 lattice rules. Their approach was to use an averaging argument to prove the existence of a good generating vector $\bsz$ such that the worst-case error is small. Specifically, they considered the average of the squared worst-case error over all generating vectors $\bsz \in \{1,\ldots,n-1\}^d$:
\[ \overline{e}^2(n)=\frac{1}{(n-1)^d}\sum_{\bsz\in \{1,\ldots,n-1\}^d}e^2(n,\bsz). \]
By combining Equation~(11), Proposition~3, and Lemma~4 in \cite{KS18}, we obtain the following bound on $\overline{e}^2(n)$.

\begin{proposition}[\emph{Kazashi and Sloan} \cite{KS18}]\label{prop:KS18}
    Let $n$ be an odd prime. Then the mean square worst-case error satisfies
    \[ \overline{e}^2(n)\leq \frac{1}{n}\sum_{\emptyset \neq u\subseteq \{1,\ldots,d\}}\gamma_u \left[ c_u+\left(\frac{1}{2\pi^2} \frac{n}{n-1} \right)^{|u|}\sum_{\kappa=1}^{n-1}\left( T_{n}(\kappa)+\frac{10\pi^2 \log n}{9n}\right)^{|u|}\right] , \]
    where 
    \[ c_u :=\frac{2}{3^{|u|}}+\frac{1}{4^{|u|}}, \quad \text{and}\quad T_n(\kappa) := \sum_{q=1}^{(n-1)/2}\frac{1}{q\, |r(q\kappa,n)|}, \]
    with $r(j, n)$ denoting the unique integer congruent to $j$ modulo $n$ in the set $\{-(n-1)/2,\ldots,(n-1)/2\}$. That is,
    \[ r(j,n) := \begin{cases} j\bmod n & \text{if $j\bmod n \leq (n-1)/2$},\\ (j \bmod n)-n & \text{if $j\bmod n > (n-1)/2$}.\end{cases} \]
\end{proposition}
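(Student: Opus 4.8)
The plan is to expand $\overline{e}^2(n)$ through the reproducing-kernel error formula, reduce it to one-dimensional averages over the generating vector, and then feed these into the three cited ingredients of \cite{KS18} in turn. First I would write the squared worst-case error in the standard reproducing-kernel form
\[ e^2(n,\bsz)=\int_{[0,1)^{2d}}K_{d,\bsgamma}(\bsx,\bsy)\rd\bsx\rd\bsy-\frac{2}{n}\sum_{i=0}^{n-1}\int_{[0,1)^d}K_{d,\bsgamma}(\bsx_i,\bsy)\rd\bsy+\frac{1}{n^2}\sum_{i,k=0}^{n-1}K_{d,\bsgamma}(\bsx_i,\bsx_k), \]
and exploit the one-dimensional identity $\int_0^1\eta(x,y)\rd y=0$, valid for every $x$ because both $B_1(x)B_1(\cdot)$ and $\tfrac12 B_2(|x-\cdot|)$ integrate to zero. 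Through the product form of $K_{d,\bsgamma}$ every factor carrying an integration then vanishes, so the first two terms and the $u=\emptyset$ part of the double sum each reduce to $\gamma_\emptyset$ and cancel, leaving
\[ e^2(n,\bsz)=\sum_{\emptyset\neq u\subseteq\{1,\ldots,d\}}\gamma_u\,\frac{1}{n^2}\sum_{i,k=0}^{n-1}\prod_{j\in u}\eta\!\left(\left\{\tfrac{iz_j}{n}\right\},\left\{\tfrac{kz_j}{n}\right\}\right). \]
This subset decomposition is what I expect Equation~(11) of \cite{KS18} to encode.

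Next I would average over $\bsz\in\{1,\ldots,n-1\}^d$. Because the coordinates are independent and the summand factorizes over $j\in u$, the average of the product is the product of identical per-coordinate averages, giving
\[ \overline{e}^2(n)=\sum_{\emptyset\neq u}\gamma_u\,\frac{1}{n^2}\sum_{i,k=0}^{n-1}\psi_{i,k}^{\,|u|},\qquad \psi_{i,k}:=\frac{1}{n-1}\sum_{z=1}^{n-1}\eta\!\left(\left\{\tfrac{iz}{n}\right\},\left\{\tfrac{kz}{n}\right\}\right). \]
With $n$ prime I would reindex the off-diagonal pairs with nonzero indices by the ratio $\kappa\equiv k i^{-1}\pmod n$, so that $\psi_{i,k}$ depends only on $\kappa$ and $\tfrac{1}{n^2}\sum_{i,k}$ collapses into $\tfrac1n\sum_{\kappa}(\cdots)$. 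The decisive step is to split $\eta$ into its shift-invariant part $\tfrac12 B_2(|x-y|)=\tfrac12 B_2(\{x-y\})$ and its rank-one part $B_1(x)B_1(y)$: inserting the Fourier expansion $\tfrac12 B_2(\{t\})=\frac{1}{2\pi^2}\sum_{\ell\geq1}\ell^{-2}\cos(2\pi\ell t)$ into the shift-invariant part is what produces both the factor $\frac{1}{2\pi^2}\frac{n}{n-1}$ and the weights $1/(q\,|r(q\kappa,n)|)$ once the lattice sum collapses onto the dual modes.

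Carrying this through, the diagonal together with the exceptional rows and columns with index $0$ yields the dimension-independent constants collected in $c_u$, while the off-diagonal shift-invariant contribution yields $\tfrac1n(\frac{1}{2\pi^2}\frac{n}{n-1})^{|u|}\sum_\kappa T_n(\kappa)^{|u|}$; this bookkeeping is the content of Proposition~3 of \cite{KS18}. The remaining rank-one part $B_1(\{iz/n\})B_1(\{kz/n\})$ is not a function of $iz-kz$ alone, so its per-coordinate average is a normalized Dedekind sum rather than a clean character sum; estimating it uniformly in $\kappa$ and absorbing it into the additive correction $\frac{10\pi^2\log n}{9n}$ inside each factor is exactly what Lemma~4 of \cite{KS18} provides. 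Substituting these two estimates factor-by-factor into $\psi_{i,k}^{\,|u|}$ and summing over $\emptyset\neq u$ then produces the stated bound.

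The main obstacle is precisely this non-shift-invariant term: unlike the $B_2$ part it resists the Fourier/lattice cancellation, the relevant Dedekind sums are only controllable up to a logarithmic factor in $n$, and this is the sole source of the extra $\log n$ that enters through $\frac{10\pi^2\log n}{9n}$. Once Lemma~4 supplies that uniform bound, the remainder is the routine expansion of the products over $j\in u$ and the collection of the $c_u$, $\frac{1}{2\pi^2}\frac{n}{n-1}$, and $T_n(\kappa)+\frac{10\pi^2\log n}{9n}$ contributions.
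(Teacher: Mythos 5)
The paper does not actually prove this proposition; it is imported from \cite{KS18} by combining Equation~(11), Proposition~3 and Lemma~4 there, so what can be assessed is whether your reconstruction of that argument is sound. Your opening steps are: the reproducing-kernel expansion, the cancellation coming from $\int_0^1\eta(x,y)\rd y=0$, the factorization of the $\bsz$-average into identical per-coordinate averages $\psi_{i,k}$, and the reindexing of the off-diagonal pairs by $\kappa\equiv ki^{-1}\pmod n$. All of this is correct and matches the structure behind the cited results, as does the separation of the diagonal and the zero-index pairs into $c_u$.

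The genuine error is in what you call the decisive step: you attribute the term $\bigl(\frac{1}{2\pi^2}\frac{n}{n-1}\bigr)T_n(\kappa)$ to the shift-invariant part $\frac12 B_2(\{x-y\})$ and the correction $\frac{10\pi^2\log n}{9n}$ to the rank-one part $B_1(x)B_1(y)$. This is backwards, and the mechanism you describe cannot work. For $n$ prime and $i\not\equiv k$, the map $z\mapsto (i-k)z\bmod n$ is a bijection of $\{1,\ldots,n-1\}$, so
\[
\frac{1}{n-1}\sum_{z=1}^{n-1}\tfrac12 B_2\!\left(\left\{\tfrac{(i-k)z}{n}\right\}\right)=\frac{1}{2(n-1)}\sum_{j=1}^{n-1}B_2(j/n)=-\frac{1}{12n},
\]
a constant independent of $i-k$: after the $z$-average the shift-invariant part carries no dependence on $\kappa$ whatsoever, so it cannot be the source of the weights $1/(q\,|r(q\kappa,n)|)$. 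The quantity $T_n(\kappa)$ arises entirely from the non-shift-invariant Dedekind-sum part: expanding $B_1(\{t\})=-\pi^{-1}\sum_{\ell\ge1}\ell^{-1}\sin(2\pi\ell t)$ and using $\sin A\sin B=\frac12[\cos(A-B)-\cos(A+B)]$ produces the prefactor $\frac{1}{\pi^2}\cdot\frac12=\frac{1}{2\pi^2}$, the character-sum collapse onto the congruences $\ell\equiv\pm q\kappa\pmod n$ singles out the minimal residues $|r(q\kappa,n)|$, and the additive $\frac{10\pi^2\log n}{9n}$ absorbs the non-minimal representatives and tails of that double sum (this is what Lemma~4 of \cite{KS18} controls, alongside the uniform bound $T_n(\kappa)\le\pi^2/6$). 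So although your skeleton is right, the step that is supposed to generate the central quantity $T_n(\kappa)$ is assigned to a part of the kernel from which it provably cannot arise, and as written the argument would not reproduce the stated bound.
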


The remaining issue is to give an upper bound on $T_n(\kappa)$ for $1\leq \kappa\leq n-1$. Although \cite[Lemma~4]{KS18} shows that $T_n(\kappa)\le \pi^2/6$ uniformly for all $\kappa$, substituting this constant bound into Proposition~\ref{prop:KS18} yields an upper bound on $\overline{e}^2(n)$ that does not decay as $n \to \infty$. To address this, Kazashi and Sloan proposed a number-theoretic conjecture, which is rephrased as follows. Under this assumption, they showed that $\overline{e}^2(n)$ can be bounded by $O(1/n)$, up to a dimension-independent logarithmic factor.

\begin{conjecture}[\emph{Kazashi and Sloan} \cite{KS18}]\label{conjecture:KS18}
    Let $n$ be an odd prime. There exist constants $C_1,C_2>0$ and $\alpha\ge 2$, all independent of $n$, such that
    \[ T_n(\kappa)> C_1\frac{(\log n)^{\alpha}}{n}\]
    holds for at most $C_2 (\log n)^{\alpha}$ values of $\kappa$ among $\{1,\ldots,n-1\}$. 
\end{conjecture}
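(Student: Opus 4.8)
The plan is to reduce the tail count for $T_n(\kappa)$ to a counting statement about the continued fraction expansion of $\kappa/n$. Writing each summand of $T_n(\kappa)$ as $1/(q\,|r(q\kappa,n)|)$ and setting $(h_1,h_2)=(q,-r(q\kappa,n))$, the terms are indexed by the nonzero points of the dual lattice $\{(h_1,h_2)\in\ZZ^2 : h_2\equiv h_1\kappa \pmod n\}$ lying in the box $[1,(n-1)/2]\times[-(n-1)/2,(n-1)/2]$, and $T_n(\kappa)=\sum 1/|h_1h_2|$ over these points. The dominant terms come from the points closest to the origin in the product metric $|h_1h_2|$, and these are precisely the continued fraction convergents $p_i/q_i$ of $\kappa/n$: for such a convergent $|q_i\kappa-p_in|\asymp n/q_{i+1}$, so the associated term is $\asymp q_{i+1}/(q_in)\asymp a_{i+1}/n$, where $a_1,a_2,\dots$ are the partial quotients. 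First I would make this precise by a lattice-point (Ostrowski) argument, establishing a bound of the form $T_n(\kappa)\le \frac{C}{n}\bigl(S(\kappa)+(\log n)^2\bigr)$, where $S(\kappa):=\sum_i a_i(\kappa/n)$ is the sum of partial quotients and the $(\log n)^2$ term accounts for the non-convergent points; one checks that every anomalously large partial quotient is carried by a convergent with denominator $q_i\le(n-1)/2$, so it is genuinely seen by $T_n(\kappa)$.

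Granting this, the reduction is clean. Because $\alpha\ge 2$, for large $n$ the baseline $(\log n)^2$ is dominated by the threshold, so that
\[ T_n(\kappa) > C_1\frac{(\log n)^\alpha}{n} \quad\Longrightarrow\quad S(\kappa) > C_1'(\log n)^\alpha \]
for a suitable $C_1'>0$ once $C_1$ is taken large enough. It therefore suffices to prove the purely number-theoretic bound
\[ \#\bigl\{\kappa\in\{1,\dots,n-1\} : S(\kappa) > C_1'(\log n)^\alpha\bigr\} \le C_2(\log n)^\alpha. \]
I would attack this by splitting according to which convergent carries the large partial quotient and, for each denominator scale, bounding the number of $\kappa$ whose expansion exhibits a partial quotient exceeding a threshold $\asymp(\log n)^\alpha$, using the self-similarity of the Gauss map together with transfer-operator (Gauss--Kuzmin--Wirsing) estimates for the proportion of expansions with a prescribed large partial quotient.

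The main obstacle is exactly this last counting step, and it is the reason the statement was left open. The partial quotients obey the Gauss--Kuzmin law, whose tail is only $\PP(a_i\ge M)\asymp 1/M$, and a \emph{single} anomalously large partial quotient already pushes $S(\kappa)$ past $C_1'(\log n)^\alpha$; a naive union bound over the $\asymp\log n$ stages then yields an exceptional count of order $n\log n/M\asymp n/(\log n)^{\alpha-1}$, which is polynomial in $n$ and far exceeds the target $C_2(\log n)^\alpha$. Any successful proof must therefore exploit an arithmetic feature special to the prime modulus $n$---effectively showing that the residues $\kappa$ admitting a partial quotient as large as $(\log n)^\alpha$ are far rarer than the generic measure-theoretic heuristic predicts. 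Supplying this suppression, presumably through strong equidistribution or sieve input for the fractions $\kappa/n$ with prime denominator, is the crux of the argument and the step I expect to be decisive.
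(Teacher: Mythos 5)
The statement you set out to prove is false, and the paper's treatment of it is a \emph{disproof} (Theorem~\ref{thm:first}), not a proof. The counterexample is elementary: for every $\kappa$ with $1\le \kappa\le \lfloor\sqrt{n}\rfloor$, the single term $q=1$ in the definition of $T_n(\kappa)$ already gives $T_n(\kappa)\ge 1/|r(\kappa,n)|=1/\kappa\ge 1/\sqrt{n}$, and for any fixed $C_1>0$, $\alpha\ge 2$ one has $1/\sqrt{n}\ge C_1(\log n)^\alpha/n$ for all large $n$. Since $\lfloor\sqrt{n}\rfloor$ eventually exceeds $C_2(\log n)^\alpha$ for any fixed $C_2$, the exceptional set has cardinality at least $\lfloor\sqrt{n}\rfloor$, polynomially larger than the polylogarithmic bound the conjecture demands. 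So no choice of constants can make the statement true, and the final counting step of your plan cannot be completed.

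What is striking is that your own framework already exhibits this. In your continued-fraction language, $\kappa\le\sqrt{n}$ means $a_1(\kappa/n)=\lfloor n/\kappa\rfloor\ge\sqrt{n}$, and more generally the set of $\kappa$ whose \emph{first} partial quotient is at least $M$ has cardinality about $n/M$ --- exactly the Gauss--Kuzmin tail. The ``arithmetic suppression special to the prime modulus'' you hoped would rescue the union bound provably does not exist even at this first stage: primality of $n$ puts no constraint on $a_1$, because $\kappa$ ranges over all nonzero residues. Your instinct that the union-bound obstruction was the crux and could not be beaten naively was correct; the right conclusion to draw from it is that the conjecture fails, with the $q=1$ (equivalently $a_1$) terms as witnesses. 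The paper's remedy is a weaker but true substitute proved in one stroke by Markov's inequality applied to the uniform average $\EE[T_n(X)]\le 16(\log n)^2/n$: namely $T_n(\kappa)\ge 4\log n/\sqrt{n}$ for at most $4\sqrt{n}\log n$ values of $\kappa$ (Lemma~\ref{lem}), which is what yields the bound $\overline{e}^2(n)=O(n^{-1/2}\log n)$ in Theorem~\ref{thm:final}; the counterexample above shows this trade-off is essentially optimal up to the logarithmic factor.
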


We now state the aim of this article. First, we prove that Conjecture~\ref{conjecture:KS18} does not hold. As a remedy, we then establish a weaker result regarding the quantity $T_n(\kappa)$. This leads to an upper bound on $\overline{e}^2(n)$ of $O(n^{-1/2} \log n)$, which in turn implies the existence of a good generating vector $\bsz$, such that the corresponding unshifted rank-1 lattice rule $Q_{d,n,\bsz}$ achieves a worst-case error of $O(n^{-1/4} (\log n)^{1/2})$. Although this rate is far from optimal, it provides---so far as the author is aware---the first theoretical evidence that unshifted rank-1 lattice rules can still be effective for non-periodic functions in $H_{d,\bsgamma}$. Whether this rate can be improved remains an open question.

\section{Results}

The first result is as follows:
\begin{theorem}\label{thm:first}
Conjecture~\ref{conjecture:KS18} does not hold. 
\end{theorem}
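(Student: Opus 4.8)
The plan is to disprove Conjecture~\ref{conjecture:KS18} by exhibiting, for every admissible choice of constants, a family of indices $\kappa$ that is far too large for the claimed sparsity to hold. The key observation is that the single term $q=1$ already yields a clean lower bound: every summand in the definition of $T_n(\kappa)$ is positive, and the $q=1$ term equals $1/|r(\kappa,n)|$, so that
\[ T_n(\kappa)\ \geq\ \frac{1}{|r(\kappa,n)|}\qquad\text{for all } 1\leq \kappa\leq n-1. \]
Here $r(\kappa,n)\neq 0$ because $n$ is prime and $1\leq \kappa\leq n-1$. Thus $T_n(\kappa)$ is large precisely when $\kappa$ reduces to a small symmetric residue modulo $n$, and such $\kappa$ are abundant.

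Next I would fix arbitrary constants $C_1,C_2>0$ and $\alpha\geq 2$ as in the conjecture, set the threshold $M:=n/(C_1(\log n)^{\alpha})$, and consider the set of $\kappa$ with $0<|r(\kappa,n)|<M$. For each such $\kappa$ the displayed bound gives $T_n(\kappa)>1/M=C_1(\log n)^{\alpha}/n$, so every one of these indices belongs to the exceptional set in the conjecture. Since the map $\kappa\mapsto r(\kappa,n)$ is a bijection from $\{1,\ldots,n-1\}$ onto $\{\pm 1,\ldots,\pm(n-1)/2\}$, the number of $\kappa$ with $0<|r(\kappa,n)|<M$ equals exactly $2(\lceil M\rceil-1)$, which is of order $n/(\log n)^{\alpha}$. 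One checks that $M<(n-1)/2$ for all large $n$ (because $C_1(\log n)^{\alpha}\to\infty$), so this count is not truncated by the admissible range $1\leq |r|\leq(n-1)/2$.

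The contradiction then follows by comparing magnitudes. The conjecture requires the exceptional set to have size at most $C_2(\log n)^{\alpha}$, whereas we have produced at least $\sim n/(C_1(\log n)^{\alpha})$ of its members. Since
\[ \frac{n/(C_1(\log n)^{\alpha})}{C_2(\log n)^{\alpha}}\ =\ \frac{n}{C_1C_2(\log n)^{2\alpha}}\ \longrightarrow\ \infty\qquad (n\to\infty), \]
the required inequality fails for every sufficiently large prime $n$. As $C_1,C_2,\alpha$ were arbitrary, no choice of constants can satisfy the conjecture, proving Theorem~\ref{thm:first}.

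I do not anticipate a genuine analytic obstacle: the whole argument rests on the elementary observation that the $q=1$ term alone forces $T_n(\kappa)$ to be large for the polynomially many (in fact $\Theta(n/(\log n)^{\alpha})$) indices with small residue, which overwhelms the merely polylogarithmic exceptional set the conjecture permits. The only points requiring care are the bookkeeping in the counting step---verifying that the small-residue indices are counted exactly and that $M$ remains within the range of $q$---and stating the quantifier order (\emph{for all} constants there exist arbitrarily large $n$) precisely enough that a single explicit family genuinely contradicts the conjecture.
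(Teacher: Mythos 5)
Your proposal is correct and rests on the same key observation as the paper's proof: the $q=1$ term alone gives $T_n(\kappa)\ge 1/|r(\kappa,n)|$, so indices with small symmetric residue are forced into the exceptional set, and there are far too many of them. The only difference is the cutoff---you count all $\kappa$ with $|r(\kappa,n)|<n/(C_1(\log n)^{\alpha})$, obtaining $\Theta(n/(\log n)^{\alpha})$ violating indices, whereas the paper restricts to $1\le\kappa\le\lfloor\sqrt{n}\rfloor$ and is content with $\lfloor\sqrt{n}\rfloor$ of them, which already suffices for the contradiction.
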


\begin{proof}
Assume $n\ge 7$, which ensures that $(n-1)/2\ge \sqrt{n}$. For any $1\leq \kappa\leq \lfloor\sqrt{n}\rfloor$, consider the term with $q=1$ in the definition of $T_n(\kappa)$. We have
\begin{align*}
    T_n(\kappa)\geq \frac{1}{|r(\kappa,n)|}=\frac{1}{\kappa}\geq \frac{1}{\sqrt{n}}.
\end{align*}
Now fix any constants $C_1,C_2>0$ and $\alpha\ge 2$. Then there exists $n_0\in \NN$ such that, for all $n\geq n_0$, we have
\[ \lfloor \sqrt{n}\rfloor \geq C_2 (\log n)^{\alpha}\quad \text{and}\quad \frac{1}{\sqrt{n}}\ge C_1\frac{(\log n)^{\alpha}}{n}. \]
It follows that, for these $n$,
\[ T_n(\kappa) \ge C_1\frac{(\log n)^{\alpha}}{n}, \quad \text{for all $1\leq \kappa\leq \lfloor\sqrt{n}\rfloor$}.\]
Hence,
\[ \left| \left\{ 1\leq \kappa\leq n-1\mid T_n(\kappa) \ge C_1\frac{(\log n)^{\alpha}}{n} \right\}\right|\geq \lfloor \sqrt{n}\rfloor \geq C_2 (\log n)^{\alpha}, \]
which contradicts the existence of constants $C_1, C_2 > 0$ and $\alpha \ge 2$ such that Conjecture~\ref{conjecture:KS18} holds. 
\end{proof}

We now establish the aforementioned weaker result for the quantity $T_n(\kappa)$, which will play a central role in deriving our bound on the mean square worst-case error $\overline{e}^2(n)$.

\begin{lemma}\label{lem}
    Let $n$ be an odd prime. Then the inequality
    \[ T_n(\kappa)\ge 4\frac{\log n}{\sqrt{n}}\]
    holds for at most $4 \sqrt{n}\log n$ values of $\kappa\in \{1,\ldots,n-1\}$. 
\end{lemma}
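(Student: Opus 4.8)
The plan is to read this as a statement about how many values of $\kappa$ can force $T_n(\kappa)$ to exceed a threshold, and to attack it with Markov's inequality applied to the counting measure on $\{1,\ldots,n-1\}$ (the keyword list already flags this as the intended tool). Concretely, if I can evaluate or bound from above the average
\[ \overline{T}_n := \frac{1}{n-1}\sum_{\kappa=1}^{n-1} T_n(\kappa), \]
then the number of $\kappa$ with $T_n(\kappa)\ge a$ is at most $(n-1)\overline{T}_n/a$, and it will remain only to check that this quantity is bounded by $4\sqrt{n}\log n$ for the choice $a = 4\log n/\sqrt{n}$.

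The first substantive step is to compute $\overline{T}_n$ exactly. Interchanging the order of summation gives
\[ \overline{T}_n = \sum_{q=1}^{(n-1)/2}\frac{1}{q}\cdot \frac{1}{n-1}\sum_{\kappa=1}^{n-1}\frac{1}{|r(q\kappa,n)|}. \]
Here I would exploit the primality of $n$: for each fixed $q\in\{1,\ldots,(n-1)/2\}$ the map $\kappa\mapsto q\kappa\bmod n$ is a bijection of $\{1,\ldots,n-1\}$ onto itself, so as $\kappa$ runs over $\{1,\ldots,n-1\}$ the residue $r(q\kappa,n)$ runs exactly once over each element of $\{-(n-1)/2,\ldots,-1,1,\ldots,(n-1)/2\}$. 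Consequently the inner average is independent of $q$ and equals
\[ \frac{1}{n-1}\cdot 2\sum_{m=1}^{(n-1)/2}\frac{1}{m} = \frac{2H_{(n-1)/2}}{n-1}, \]
where $H_m=\sum_{k=1}^{m} 1/k$. Since the outer factor is $\sum_{q=1}^{(n-1)/2} 1/q = H_{(n-1)/2}$, this yields the clean closed form
\[ \overline{T}_n = \frac{2\bigl(H_{(n-1)/2}\bigr)^2}{n-1}. \]

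With this in hand, Markov's inequality gives
\[ \bigl|\{\,1\le \kappa\le n-1 : T_n(\kappa)\ge a\,\}\bigr| \le \frac{(n-1)\overline{T}_n}{a} = \frac{2\bigl(H_{(n-1)/2}\bigr)^2}{a}. \]
Substituting $a=4\log n/\sqrt{n}$ turns the right-hand side into $\sqrt{n}\,(H_{(n-1)/2})^2/(2\log n)$, so the claimed bound $4\sqrt{n}\log n$ follows as soon as $(H_{(n-1)/2})^2\le 8(\log n)^2$, i.e. $H_{(n-1)/2}\le 2\sqrt{2}\,\log n$. I would close this with the elementary estimate $H_m\le 1+\log m$, giving $H_{(n-1)/2}\le 1+\log((n-1)/2)<1+\log n$, and then checking that $1+\log n\le 2\sqrt{2}\,\log n$ for every odd prime $n\ge 3$.

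The computation is short, and the only place that genuinely needs care is this final constant-chasing step: confirming that the loose harmonic-number bound really does fit inside $2\sqrt{2}\,\log n$ \emph{uniformly} over all admissible $n$, including the smallest primes where $\log n$ is not large. The slack is comfortable, so I expect no real difficulty, but an off-by-a-constant error would hide precisely here, so I would verify the inequality explicitly rather than merely asymptotically.
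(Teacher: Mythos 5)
Your argument is correct and is essentially the paper's own proof: both bound the count via Markov's inequality applied to the average of $T_n(\kappa)$ over $\kappa\in\{1,\ldots,n-1\}$, using the same bijection $\kappa\mapsto r(q\kappa,n)$ (valid since $n$ is prime) to show the inner average is independent of $q$, and the same harmonic-number estimate to control the resulting $\bigl(H_{(n-1)/2}\bigr)^2/(n-1)$. The only cosmetic differences are that you compute the mean exactly before bounding it, whereas the paper bounds it directly by $16(\log n)^2/n$ and chooses the threshold as the square root of that bound; the constant-checking you flag at the end goes through with room to spare for all odd primes $n\ge 3$.
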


\begin{remark}
    It can be inferred from the proof of Theorem~\ref{thm:first} that the inequality $T_n(\kappa)\ge 1/\sqrt{n}$ is satisfied for \emph{at least} $\lfloor\sqrt{n}\rfloor$ values of $\kappa\in \{1,\ldots,n-1\}$. This implies that the result of Lemma~\ref{lem} is essentially optimal, up to a logarithmic factor in $n$.
\end{remark}

\begin{proof}[Proof of Lemma~\ref{lem}]
    Throughout this proof, let $X$ be a uniformly distributed random variable over $\{1,\ldots,n-1\}$. Then 
    \[ \mu := \EE[T_n(X)]=\sum_{q=1}^{(n-1)/2}\frac{1}{q}\EE\left[\frac{1}{|r(qX,n)|}\right].\]
    For any fixed $q\in \{1,\ldots,(n-1)/2\}$, the map $\kappa \mapsto r(q\kappa,n)$ defines a bijection from $\{1,\ldots,n-1\}$ to $\{-(n-1)/2,\ldots,(n-1)/2\} \setminus \{0\}$. Hence,
    \begin{align*}
    \mathbb{E}\left[ \frac{1}{|r(qX,n)|} \right] 
    &= \frac{1}{n-1} \sum_{j=1}^{n-1} \frac{1}{|r(j,n)|} 
    = \frac{2}{n-1} \sum_{j=1}^{(n-1)/2} \frac{1}{j} \\
    &\le \frac{2}{n-1} \left( 1 + \int_1^{(n-1)/2} \frac{1}{x} \, \mathrm{d}x \right) 
    = \frac{2}{n-1} \left( 1 + \log \frac{n-1}{2} \right).
    \end{align*}

    Therefore,
    \begin{align*}
    \mu &\le \frac{2}{n-1} \left( 1 + \log \frac{n-1}{2} \right) \sum_{q=1}^{(n-1)/2} \frac{1}{q} \\
    &\le \frac{2}{n-1} \left( 1 + \log \frac{n-1}{2} \right)^2 
    \le 16 \frac{(\log n)^2}{n} =: \tilde{\mu}.
    \end{align*}

    Since $T_n(X) > 0$, applying Markov's inequality gives
    \[
    \PP\left[ T_n(X) \ge t \right] \le \frac{\mu}{t},
    \]
    for any $t > 0$. Setting $t = \tilde{\mu}^{1/2}$, we obtain
    \[
    \PP\left[ T_n(X) \ge \tilde{\mu}^{1/2} \right] \le \frac{\mu}{\tilde{\mu}^{1/2}} \le \tilde{\mu}^{1/2}.
    \]
    Thus, the number of $\kappa \in \{1,\ldots,n-1\}$ satisfying
    \[
    T_n(\kappa) \ge \tilde{\mu}^{1/2} = \frac{4 \log n}{\sqrt{n}}
    \]
    is at most $(n-1)\tilde{\mu}^{1/2} \le 4 \sqrt{n} \log n$.
\end{proof}

Finally, combining Proposition~\ref{prop:KS18} and Lemma~\ref{lem}, we obtain a bound $\overline{e}^2(n) = O(n^{-1/2} \log n)$ as follows.

\begin{theorem}\label{thm:final}
    Let $n$ be an odd prime. Then the mean squared worst-case error $\overline{e}^2(n)$ of unshifted rank-1 lattice rules in the space $H_{d,\bsgamma}$ satisfies
    \[ \overline{e}^2(n) \leq \frac{\log n}{\sqrt{n}}\sum_{\emptyset \neq u\subseteq \{1,\ldots,d\}}\gamma_u C_u, \]
    where
    \[ C_u := \frac{2}{3^{|u|}}+\frac{1}{4^{|u|}}+4\left(\frac{23}{24}\right)^{|u|} + \left( \frac{3}{\pi^2}+\frac{5}{6}\right)^{|u|}.\]
\end{theorem}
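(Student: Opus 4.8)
The plan is to substitute the two available estimates, Proposition~\ref{prop:KS18} and Lemma~\ref{lem}, into the weighted sum and, for each fixed nonempty $u\subseteq\{1,\ldots,d\}$, to control the inner quantity
\[ S_u:=\sum_{\kappa=1}^{n-1}\left(T_n(\kappa)+\frac{10\pi^2\log n}{9n}\right)^{|u|}. \]
Writing $m:=|u|$ and $\delta:=10\pi^2\log n/(9n)$, I would split the range of $\kappa$ at the threshold of Lemma~\ref{lem}: the \emph{good} indices with $T_n(\kappa)<4\log n/\sqrt n$ and the \emph{bad} indices with $T_n(\kappa)\ge 4\log n/\sqrt n$. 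On the good set there are at most $n-1$ terms, each bounded by $(4\log n/\sqrt n+\delta)^m$; on the bad set, Lemma~\ref{lem} caps the number of terms by $4\sqrt n\log n$, and each term is controlled by the uniform estimate $T_n(\kappa)\le\pi^2/6$ from \cite[Lemma~4]{KS18}, giving a bound $(\pi^2/6+\delta)^m$ per term.

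The key bookkeeping step is to extract \emph{exactly one} factor of $\log n/\sqrt n$ from each contribution, so that the final prefactor is $\log n/\sqrt n$ and not its $m$-th power. For the good part, I would write $4\log n/\sqrt n+\delta=(\log n/\sqrt n)(4+10\pi^2/(9\sqrt n))$, absorb $(n-1)/n\le1$ and $(\log n/\sqrt n)^{m-1}\le1$ (valid since $\log n\le\sqrt n$ for all $n\ge1$), and reduce to bounding the base $\frac{2}{\pi^2}\frac{n}{n-1}+\frac59\frac{\sqrt n}{n-1}$; using $\frac{n}{n-1}\le\frac32$ for every odd prime $n\ge3$ together with $\sqrt n\le n$ yields the clean constant $\frac{3}{\pi^2}+\frac56$, hence the term $(\frac{3}{\pi^2}+\frac56)^m$. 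For the bad part, the factor $4\sqrt n\log n/n=4\log n/\sqrt n$ already supplies the correct single power, and the base reduces to $\frac{1}{12}\frac{n}{n-1}+\frac59\frac{\log n}{n-1}$, which by $\frac{n}{n-1}\le\frac32$ and the crude $\log n\le n$ is at most $\frac18+\frac56=\frac{23}{24}$, giving $4(\frac{23}{24})^m$. Finally, the term $c_u/n$ is handled by the elementary $\frac1n\le\frac{\log n}{\sqrt n}$, equivalent to $\sqrt n\log n\ge1$, which holds for all odd primes.

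Summing the three contributions for each $u$ reproduces $C_u=\frac{2}{3^m}+\frac{1}{4^m}+4(\frac{23}{24})^m+(\frac{3}{\pi^2}+\frac56)^m$ and hence the claimed bound. I do not anticipate a genuine obstacle: once Lemma~\ref{lem} is in hand, the result is essentially an exercise in careful estimation. The single point requiring attention is the interplay between the exponent $m$ and the lone decay factor $\log n/\sqrt n$---one must resist factoring out $(\log n/\sqrt n)^m$, which would be overly optimistic, and instead peel off just one power via $\log n/\sqrt n\le1$, keeping the full $m$-dependence inside the base so that it combines with $\gamma_u$ to yield a dimension-independent sum under the summability conditions on $\bsgamma$.
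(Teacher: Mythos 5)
Your proposal is correct and follows essentially the same route as the paper's proof: split $\{1,\ldots,n-1\}$ at the threshold $4\log n/\sqrt n$ of Lemma~\ref{lem}, bound the at most $4\sqrt n\log n$ bad indices via the uniform estimate $T_n(\kappa)\le\pi^2/6$, peel off a single factor of $\log n/\sqrt n$ using $\log n\le\sqrt n$, and absorb $\frac{n}{n-1}\le\frac32$ into the base. Your bookkeeping distributes the factor $\left(\frac{1}{2\pi^2}\frac{n}{n-1}\right)^{|u|}$ earlier than the paper does, but the resulting constants $\frac{23}{24}$ and $\frac{3}{\pi^2}+\frac56$, and hence $C_u$, are identical.
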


\begin{proof}
    We aim to bound the right-hand side of the inequality in Proposition~\ref{prop:KS18}. Define
    \[ \Kcal_n := \left\{ 1\leq \kappa\leq n-1\mid T_n(\kappa)\ge 4\frac{\log n}{\sqrt{n}} \right\}. \]
    It follows from Lemma~\ref{lem} that $|\Kcal_n| \leq 4 \sqrt{n}\log n$. For any $\kappa\in \Kcal_n$, we use a constant bound $T_n(\kappa)\le \pi^2/6$ from \cite[Lemma~4]{KS18} to get
    \[ T_{n}(\kappa)+\frac{10\pi^2 \log n}{9n} \le \frac{\pi^2}{6}+\frac{10\pi^2}{9}=\frac{23\pi^2}{18}.\]
    For any $\kappa\notin \Kcal_n$, using Lemma~\ref{lem}, we have
    \[ T_{n}(\kappa)+\frac{10\pi^2 \log n}{9n}< 4\frac{\log n}{\sqrt{n}}+\frac{10\pi^2 \log n}{9\sqrt{n}}=\left( 4+\frac{10\pi^2}{9}\right)\frac{\log n}{\sqrt{n}}. \]

    Then, for any non-empty subset $u\subseteq \{1,\ldots,d\}$, it holds that
    \begin{align*}
        & \sum_{\kappa=1}^{n-1}\left( T_{n}(\kappa)+\frac{10\pi^2 \log n}{9n}\right)^{|u|} \\
        & = \sum_{\kappa\in \Kcal_n}\left( T_{n}(\kappa)+\frac{10\pi^2 \log n}{9n}\right)^{|u|}+\sum_{\kappa\notin \Kcal_n}\left( T_{n}(\kappa)+\frac{10\pi^2 \log n}{9n}\right)^{|u|}\\
        & \le |\Kcal_n| \left(\frac{23\pi^2}{18}\right)^{|u|} + (n-1)\left( 4+\frac{10\pi^2}{9}\right)^{|u|}\left(\frac{\log n}{\sqrt{n}}\right)^{|u|}\\
        & \le \sqrt{n}\log n \left[ 4\left(\frac{23\pi^2}{18}\right)^{|u|} + \left( 4+\frac{10\pi^2}{9}\right)^{|u|}\right] =: \tilde{c}_u \sqrt{n}\log n,
    \end{align*}
    with 
    \[ \tilde{c}_u = 4\left(\frac{23\pi^2}{18}\right)^{|u|} + \left( 4+\frac{10\pi^2}{9}\right)^{|u|}. \]
    This bound, applied to the inequality in Proposition~\ref{prop:KS18}, leads to
    \begin{align*}
        \overline{e}^2(n) & \leq \frac{1}{n}\sum_{\emptyset \neq u\subseteq \{1,\ldots,d\}}\gamma_u \left[ c_u+\left(\frac{1}{2\pi^2} \frac{n}{n-1} \right)^{|u|}\tilde{c}_u \sqrt{n}\log n\right]\\
        & \leq \frac{\sqrt{n}\log n}{n}\sum_{\emptyset \neq u\subseteq \{1,\ldots,d\}}\gamma_u \left[ c_u+\left(\frac{3}{4\pi^2}\right)^{|u|}\tilde{c}_u \right].
    \end{align*}
    This establishes the claimed bound in Theorem~\ref{thm:final}, completing the proof.
\end{proof}

\begin{remark}
    Theorem~\ref{thm:final} implies the existence of a good generating vector $\bsz$ whose worst-case error in the space $H_{d,\bsgamma}$ satisfies
    \[
        e^2(n,\bsz) \leq \left(\frac{\log n}{\sqrt{n}} \sum_{\emptyset \neq u\subseteq \{1,\ldots,d\}} \gamma_u C_u \right)^{1/2}.
    \]
    This error bound is independent of the dimension $d$ provided that
    \[
        C := \sum_{|u|\leq \infty} \gamma_u C_u < \infty.
    \]
    Although we omit the details, in the case of product weights, that is, $\gamma_u = \prod_{j\in u} \gamma_j$ for a sequence $\gamma_1, \gamma_2, \ldots \in \RR_{\geq 0}$, this condition simplifies to
    \[
        \sum_{j=1}^\infty \gamma_j < \infty.
    \]
\end{remark}

\section*{Acknowledgments}
The author would like to thank Yoshihito Kazashi for valuable discussions. This work was supported by JSPS KAKENHI Grant Number 23K03210.

\bibliographystyle{amsplain} 
\bibliography{ref}

\end{document}